\documentclass[12pt,reqno]{amsart}
\usepackage{amssymb,amsthm,amsmath,amstext,amsxtra}
\usepackage{mathrsfs,bm}
\usepackage{fullpage}
\usepackage[all]{xy}
\usepackage{booktabs}
\usepackage{mathtools}
\usepackage{hyperref}
\hypersetup{colorlinks=true,urlcolor=blue,citecolor=blue,linkcolor=blue}
\usepackage{enumerate}
\usepackage{enumitem}
\usepackage{comment}
\usepackage{multirow}
\usepackage{colonequals}
\usepackage{tikz}
\usepackage{marginnote}
\linespread{1.05}
\usepackage[export]{adjustbox}

\usepackage{tikz}
\usepackage{tikz-cd}
\usepackage{xcolor}

\numberwithin{equation}{section}

\newtheorem{theorem}{Theorem}[section]
\newtheorem{lemma}[theorem]{Lemma}

\newtheorem{remark}[theorem]{Remark}
\newtheorem{definition}[theorem]{Definition}

\begin{document}

\title{The Gamma conjecture for $G$-functions}

\author{Wenzhe Yang}
\address{SITP Stanford University, CA, 94305}
\email{yangwz@stanford.edu}

\begin{abstract}
The Bombieri-Dwork conjecture predicts that the differential equations satisfied by $G$-functions come from geometry. In this paper, we will look at special $G$-functions whose differential equations have a special singularity with maximally unipotent monodromy. We will formulate a Gamma conjecture about such $G$-functions, which has close connections with the mirror symmetry of Calabi-Yau threefolds and the Gamma conjecture in algebraic geometry. We will provide examples to support this conjecture, which involves numerical computations using Mathematica programs.
\end{abstract}

%\vspace{-30pt}
\maketitle
\setcounter{tocdepth}{1}
\vspace{-13pt}
\vspace{-13pt}

\vspace*{0.2in}

\textbf{Keywords}: zeta value, Picard-Fuchs equation, canonical solution, Gamma conjecture.

\tableofcontents
\section{Introduction}

The Picard-Fuchs equations of families of algebraic varieties have played a crucial role in the developments of many area of mathematics, e.g. algebraic geometry, number theory etc. We will intuitively say that these differential equations come from geometry, while the Bombieri-Dwork conjecture is an attempt to characterize which differential equations are from geometry. 

Let us now explain the Bombieri-Dwork conjecture using an interesting example. In Ap\'ery's proof of the irrationality of $\zeta(3)$, the following series sum
\begin{equation} \label{eq:zeta3powerseries}
\zeta(3)=\frac{5}{2}\, \sum_{n=1}^{\infty} \frac{(-1)^{n-1}}{n^3 \binom{2n}{n}}
\end{equation}
has played a crucial role. We now construct a power series $\Pi_0(\phi)$ by
\begin{equation}
\Pi_0(\phi)=2\,\sum_{n=1}^\infty  \frac{(-1)^{n-1}}{n^3 \binom{2n}{n}} \,\phi^{n-1},
\end{equation}
which converges on the disc $|\phi|<4$. The denominator of the coefficients of $\Pi_0(\phi)$, i.e. $n^3 \binom{2n}{n}$, does not grow too fast. More precisely, there exist a constant $C$ such that
\begin{equation}
n^3 \binom{2n}{n} \leq C^n.
\end{equation}
Furthermore, $\Pi_0(\phi)$ satisfies a fourth order ordinary differential equation of the form
\begin{equation} \label{eq:picardfuchsphi}
\left( \left(4+\phi \right)\vartheta^4+\left(10+4\phi \right) \vartheta^3+\left(8+6\phi \right) \vartheta^2+\left(2+4\phi \right)\vartheta+\phi \right) \Pi_0(\phi)=0,~\vartheta=\phi \frac{d}{d\phi}.
\end{equation}
By abuse of notations, we will call this ODE the Picard-Fuchs equation of $\Pi_0(\phi)$. Such a series $\Pi_0(\phi)$ is an example of $G$-functions. The Bombieri-Dwork conjecture predicts that the differential equation \ref{eq:picardfuchsphi} comes from geometry, however it is much more difficult to prove this claim. In this paper, we will study this Picard-Fuchs equation from another point of view, which we briefly explain now!

The Picard-Fuchs equation \ref{eq:picardfuchsphi} has three regular singularities at
\begin{equation}
\phi=0,-4,\infty,
\end{equation}
while the monodromy at $\phi=\infty$ is maximally unipotent. For simplicity, let us define $\varphi$ by
\begin{equation}
\varphi:=1/\phi.
\end{equation}
In a small neighborhood of $\varphi=0$, the Picard-Fuchs equation \ref{eq:picardfuchsphi} has four canonical solutions of the form
\begin{equation}
\begin{aligned}
\varpi_0(\varphi)&=\varphi, \\
\varpi_1(\varphi)&=\varphi \log \varphi,\\
\varpi_2(\varphi)&=\varphi \log^2 \varphi+h_2(\varphi),\\
\varpi_3(\varphi)&=\varphi \log^3 \varphi+3h_2(\varphi) \log \varphi+h_3(\varphi),
\end{aligned}
\end{equation}
where $h_2(\varphi)$ and $h_3(\varphi)$ are power series in $\varphi$ that converge in a small neighborhood of $\varphi=0$. Moreover, they satisfy
\begin{equation}
h_2(0)=h_3(0)=0.
\end{equation}
The four canonical solutions $\{\varpi_i(\varphi) \}_{i=0}^3$ form a basis of the solution space of  \ref{eq:picardfuchsphi}, and $\Pi_0(\phi)$ must have an expansion of the form
\begin{equation}
\Pi_0(\phi)=\tau_0\,\varpi_0(\varphi)+\tau_1 \,\varpi_1(\varphi)+\tau_2 \,\varpi_2(\varphi)+\tau_3\,\varpi_3(\varphi),~\tau_i \in \mathbb{C}.
\end{equation}
However the complex numbers $\tau_i$ are very difficult to determine by analytic methods.

Since the Bombieri-Dwork conjecture has not been proved, we do not know whether the differential equation \ref{eq:picardfuchsphi} comes from geometry. But we can still ask whether $\tau_i$ are ``interesting'' numbers? If we assume the Bombieri-Dwork conjecture, then from the mirror symmetry of Calabi-Yau threefolds, we would expect $\tau_i$ to be zeta values \cite{Galkin, KimYang, Yang}. In this paper, we will resort to a numerical approach to compute $\tau_i$, and we have found that
\begin{equation} \label{eq:introductionpi0trans}
\tau_0=4\,\zeta(3),~\tau_1=0,~\tau_2=0,~\tau_3=-\frac{1}{3},
\end{equation}
which exactly agrees with our expectation. We will give more examples in this paper, all of which have shown the occurrence of zeta values. To explain this phenomenon, we will formulate a Gamma conjecture for the $G$-functions whose Picard-Fuchs equations have a singularity with maximally unipotent monodromy. The formula \ref{eq:introductionpi0trans} has one more interesting application, and it will give us an identity
\begin{equation}
\sum_{n=1}^\infty \frac{\binom{2n}{n}}{4^n n^2}=\frac{1}{6}\, \pi^2-2\,\log^2 2.
\end{equation}
Furthermore, other examples also yield equations like
\begin{equation}
\sum_{n=1}^\infty \frac{\binom{2n}{n}}{4^n n^5}=6\,\zeta(5)-\frac{1}{20} \pi^4 \log 2-\frac{1}{3} \zeta(3) \pi^2+4 \, \zeta(3) \, \log^2 2 -\frac{2}{9} \pi^2 \log^3 2+\frac{4}{15} \log^5 2.
\end{equation}

The outline of this paper is as follows. In Section \ref{sec:bdconjecture}, we will introduce the Bombieri-Dwork conjecture. In Section \ref{sec:serieszeta3}, we will study a series representation of $\zeta(3)$ and construct a $G$-function from it. We will numerically evaluate the expansion of this $G$-function with respect to a canonical basis. In Section \ref{sec:gammaconjecture}, we will formulate a Gamma conjecture for the $G$-functions. In Section \ref{sec:furtherexamples}, we will provide more examples to support the Gamma conjecture formulated in Section \ref{sec:gammaconjecture}. In Section \ref{sec:furtherprospects}, we will discuss some future prospects and questions.

\section{The Bombieri-Dwork conjecture} \label{sec:bdconjecture}

The Bombieri-Dwork conjecture is an attempt to characterize which differential equations are of geometric origin. Before we introduce this conjecture, let us first look at an interesting example. The Legendre family of elliptic curves is defined by the equation
\begin{equation} \label{eq:legendrefamilyequation}
E_{\lambda}: y^2=x(x-1)(x-\lambda), ~\lambda \in \mathbb{C}- \{0,1\}.
\end{equation}
While when $\lambda=0,1$, this family degenerates to a nodal cubic, whose singularity is a double point. Let $\Omega^1_{E_{\lambda}}$ be the sheaf of algebraic oneforms on $E_{\lambda}$, which is in fact trivial, i.e. there exists a nowhere vanishing oneform $\Omega$ that defines a trivialization of $\Omega^1_{E_{\lambda}}$. In the affine open subset where $y \neq 0$, $\Omega$ is given by
\begin{equation}
\Omega=\frac{dx}{2\pi y}.
\end{equation}
It is a straightforward exercise to check that $dx/(2\pi y)$ actually extends to a nonvanishing oneform on $E_\lambda$. The oneform $\Omega$ satisfies the following Picard-Fuchs equation
\begin{equation} \label{eq:picardfuchsequation}
\mathscr{D}_L \Omega=0,~\mathscr{D}_L:=\lambda(\lambda-1)\,\frac{d^2 }{d\lambda^2} +(2\lambda-1) \,\frac{d}{d \lambda}+\frac{1}{4}.
\end{equation}
The periods of $\Omega$ are given by its integrals over homology cycles, and one period is given by the integral  
\begin{equation} \label{eq:periodsintegration}
\pi_0(\lambda)=\frac{1}{\pi}\int_{1}^{\infty} \frac{dx}{\sqrt{x(x-1)(x-\lambda)}}.
\end{equation}
For convenience, let us introduce the Euler-Gauss hypergeometric function
\begin{equation}
F(a,b;c;\lambda)=\sum_{n=0}^\infty \frac{(a)_n(b)_n}{(c)_n} \frac{\lambda^n}{n!},~(a)_n:=a(a+1)\cdots (a+n-1).
\end{equation}
The power series $\pi_0(\lambda)$ is equal to
\begin{equation}
\pi_0(\lambda)=\sum_{n=0}^\infty \binom{-1/2}{n}^2 \lambda^n=F(1/2,1/2;1;\lambda),
\end{equation}
which also satisfies the Picard-Fuchs equation
\begin{equation} \label{eq:periodsPFequation}
\mathscr{D}_L \pi_0(\lambda)=0.
\end{equation}
From the identity
\begin{equation}
\binom{-1/2}{n}=(-1)^n4^{-n} \binom{2n}{n},
\end{equation}
$\pi_0(\lambda)$ can also be written as
\begin{equation}
\pi_0(\lambda)=\sum_{n=0}^\infty 16^{-n} \binom{2n}{n}^2 \lambda^n.
\end{equation}
Asymptotically, we have
\begin{equation}
\binom{2n}{n} \sim \pi^{-1/2}\cdot 4^n \cdot n^{-1/2},
\end{equation}
hence we conclude that $\pi_0(\lambda)$ converges on the unit disc $|\lambda|<1$. From its form, the power series $\pi_0(\lambda)$ belongs to a class of functions called $G$-functions.
\begin{definition}
A power series $f(\phi)=\sum_{n=0}^\infty a_n \phi^n$ is called a $G$-function if its coefficients $a_n$ are algebraic numbers and there exists a constant $C>0$ such that:
\begin{enumerate}
\item assuming we have chosen an embedding $\overline{\mathbb{Q}} \subset \mathbb{C}$, we have $|a_n| \leq C^n$.

\item there exists a sequence of integers $d_n$ ($d_n \leq C^n$), and $d_na_m$ is an algebraic integer for every $m \leq n$.

\item $f(\phi)$ satisfies a linear homogeneous differential equation with coefficients in the polynomial ring $\overline{\mathbb{Q}}[\phi]$. 
\end{enumerate} 
\end{definition}

In a non-rigorous sense, the Bombieri-Dwork conjecture predicts that the differential equation satisfied by a $G$-function comes from geometry, i.e. it is the Picard-Fuchs equation satisfied by the periods of a family of algebraic varieties defined over $\overline{\mathbb{Q}}$. The readers are referred to the books \cite{Andre, Bald} for more details of this conjecture. The power series $\pi_0(\lambda)$ of course comes from geometry, as it is a period of the Legendre family of elliptic curves, and the differential equation it satisfies is just the Picard-Fuchs equation of this family.

\begin{definition}
By abuse of notations, the Picard-Fuchs equation of a $G$-function $f(\phi)$ is the lowest order linear homogeneous differential equation that it satisfies.
\end{definition}

\section{A series representation of \texorpdfstring{$\zeta(3)$}{z(3)}} \label{sec:serieszeta3}

In this paper, we will focus our attention on the $G$-functions $f$ such that $a_n \in \mathbb{Q}$ and the differential equation $f$ satisfies has a singular point with maximally unipotent monodromy. Let us first look at an interesting example about a series representation of $\zeta(3)$.

\subsection{The Picard-Fuchs equation}

In Ap\'ery's proof of the irrationality of $\zeta(3)$, the series representation
\begin{equation} \label{eq:zeta3powerseriessection}
\zeta(3)=\frac{5}{2}\, \sum_{n=1}^{\infty} \frac{(-1)^{n-1}}{n^3 \binom{2n}{n}}
\end{equation}
has played a crucial role. From it, we can construct a power series of the form
\begin{equation} \label{eq:integralperiodFirst}
\Pi_0(\phi)=2\,\sum_{n=1}^\infty  \frac{(-1)^{n-1}}{n^3 \binom{2n}{n}} \,\phi^{n-1}=1-\frac{\phi}{24}+\frac{\phi^2}{270}+\cdots.
\end{equation}
Furthermore, asymptotically we have
\begin{equation}
\left|\frac{(-1)^{n-1}}{n^3 \binom{2n}{n}} \right| \sim \pi^{1/2}\cdot \frac{1}{4^n}\cdot \frac{1}{n^{5/2}},
\end{equation}
from which we deduce that $\Pi_0(\phi)$ converges on the disc $|\phi| \leq 4$.
\begin{lemma}
The power series $\Pi_0(\phi)$ satisfies the following ODE
\begin{equation} \label{eq:picardfuchsphisection}
\left( \left(4+\phi \right)\vartheta^4+\left(10+4\phi \right) \vartheta^3+\left(8+6\phi \right) \vartheta^2+\left(2+4\phi \right)\vartheta+\phi \right) \Pi_0=0,~\vartheta=\phi \frac{d}{d\phi}.
\end{equation}
\end{lemma}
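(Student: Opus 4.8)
The plan is to reduce the differential equation to a recurrence among the Taylor coefficients of $\Pi_0$ and then verify that the explicit coefficients satisfy it. First I would reindex the series, writing $\Pi_0(\phi)=\sum_{m=0}^\infty a_m\phi^m$ with
\[
a_m=2\,\frac{(-1)^m}{(m+1)^3\binom{2m+2}{m+1}},\qquad a_0=1.
\]
The decisive feature of the Euler operator $\vartheta=\phi\,d/d\phi$ is that it acts diagonally on monomials, $\vartheta^k\phi^m=m^k\phi^m$, so the entire verification becomes purely algebraic.

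Next I would collect the operator in \ref{eq:picardfuchsphisection} by powers of $\phi$, writing it as $\mathscr{L}=P(\vartheta)+\phi\,Q(\vartheta)$ where
\[
P(\vartheta)=4\vartheta^4+10\vartheta^3+8\vartheta^2+2\vartheta=2\vartheta(2\vartheta+1)(\vartheta+1)^2,\qquad Q(\vartheta)=\vartheta^4+4\vartheta^3+6\vartheta^2+4\vartheta+1=(\vartheta+1)^4.
\]
Applying $\mathscr{L}$ termwise gives $\mathscr{L}\Pi_0=\sum_m a_mP(m)\phi^m+\sum_m a_mQ(m)\phi^{m+1}$, so the coefficient of $\phi^0$ equals $a_0P(0)$, which vanishes because $\vartheta$ divides $P(\vartheta)$, while for $m\geq 1$ the coefficient of $\phi^m$ is $a_mP(m)+a_{m-1}Q(m-1)$. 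Hence $\mathscr{L}\Pi_0=0$ as a formal power series precisely when, for every $m\geq 1$,
\[
a_m\cdot 2m(2m+1)(m+1)^2+a_{m-1}\cdot m^4=0,
\]
using $Q(m-1)=m^4$; equivalently $a_m/a_{m-1}=-m^3/\bigl(2(2m+1)(m+1)^2\bigr)$.

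The remaining step is to check that the explicit $a_m$ obey this ratio. I would compute it directly from the closed form: the sign alternation contributes $-1$, the cubes contribute $m^3/(m+1)^3$, and the binomial factor simplifies through
\[
\frac{\binom{2m}{m}}{\binom{2m+2}{m+1}}=\frac{(2m)!\,(m+1)!^2}{m!^2\,(2m+2)!}=\frac{(m+1)^2}{(2m+1)(2m+2)}.
\]
Multiplying these and using $2m+2=2(m+1)$ reproduces exactly the required ratio, so the recurrence holds for all $m\geq 1$. Since the convergence of $\Pi_0$ on $|\phi|\leq 4$ has already been established, this formal identity is an identity of analytic functions, which completes the proof. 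I do not expect a genuine obstacle here; the only care needed is the bookkeeping in the reindexing and the factorial cancellation in the binomial ratio, together with the observation that the $\phi^0$ equation is automatically satisfied because $\vartheta\mid P(\vartheta)$, reflecting that $\phi=0$ is a regular singular point admitting the holomorphic (exponent-zero) solution $\Pi_0$.
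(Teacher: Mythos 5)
Your proposal is correct and follows essentially the same route as the paper: both reduce the ODE to a recurrence on the Taylor coefficients via the diagonal action of $\vartheta$ and then check that the explicit coefficients of $\Pi_0$ satisfy it, the recurrence $a_m\cdot 2m(2m+1)(m+1)^2+a_{m-1}\,m^4=0$ being identical to the paper's $n^3a_{n-1}+(n+1)(2n+1)(2n+2)a_n=0$. The only difference is presentational: you carry out explicitly (via the factorizations $P(\vartheta)=2\vartheta(2\vartheta+1)(\vartheta+1)^2$, $Q(\vartheta)=(\vartheta+1)^4$ and the binomial ratio) the verification that the paper dismisses as ``straightforward to check.''
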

\begin{proof}
Suppose $f_0=1+\sum_{n=1}^\infty a_n\, \phi^n$ is a solution, then we plug it into this ODE \ref{eq:picardfuchsphisection}, and we get a recursion equation about $a_n$ with initial condition
\begin{equation}
1+24\,a_1=0,~n^3a_{n-1}+(n+1)(2n+1)(2n+2)a_n=0.
\end{equation}
It is straightforward to check that the coefficients of $\Pi_0(\phi)$ is a solution to this recursion equation, therefore we deduce that $\Pi_0(\phi)$ satisfies ODE \ref{eq:picardfuchsphisection}.
\end{proof}

For simplicity, let us define the Picard-Fuchs operator $\mathscr{D}_3$ by 
\begin{equation}
\mathscr{D}_3:=\left(4+\phi \right)\vartheta^4+\left(10+4\phi \right) \vartheta^3+\left(8+6\phi \right) \vartheta^2+\left(2+4\phi \right)\vartheta+\phi,~\vartheta=\phi \frac{d}{d\phi},
\end{equation}
which has three regular singularities at $\phi=0,-4,\infty$. $\Pi_0(\phi)$ can be analytically extended to a multi-valued holomorphic function on $\mathbb{C}-\{0,-4 \}$.

\subsection{The large complex structure limit}

Now, we define a new variable $\varphi$ by 
\begin{equation}
\varphi:=1/\phi,
\end{equation}
with respect to which, the differential operator $\mathscr{D}_3$ (up to a multiplication by $\varphi$) becomes
\begin{equation}
\mathscr{D}_3=\varphi^4(1+4 \varphi)\frac{d^4}{d\varphi^4}+2\varphi^3(1+7\varphi)\frac{d^3}{d\varphi^3}+\varphi^2(1+6\varphi)\frac{d^2}{d\varphi^2}-\varphi \frac{d}{d\varphi}+1
\end{equation}
There exist four canonical solutions to $\mathscr{D}_3$ of the form
\begin{equation} \label{eq:canonicalperiodssection1}
\begin{aligned}
\varpi_0(\varphi)&=\varphi, \\
\varpi_1(\varphi)&=\varphi \log \varphi,\\
\varpi_2(\varphi)&=\varphi \log^2 \varphi+h_2(\varphi),\\
\varpi_3(\varphi)&=\varphi \log^3 \varphi+3h_2(\varphi) \log \varphi+h_3(\varphi),
\end{aligned}
\end{equation}
where $h_2(\varphi)$ and $h_3(\varphi)$ are power series in $\varphi$. $h_2(\varphi)$ is of the form
\begin{equation}
h_2(\varphi)=\sum_{n=2}^\infty b_n \varphi^n,
\end{equation}
whose coefficients are determined by the recursion equation
\begin{equation}
b_2=-4,~(n-1)(2n-1)(2n-2)b_n+n^3b_{n+1}=0.
\end{equation}
This recursion equation can be solved explicitly and we obtain
\begin{equation}
b_{n+1}=\frac{2!(-1)^{n}\binom{2n}{n}}{n^2}.
\end{equation}
When $n$ goes to infinity, the absolute value of $b_n$ is asymptotically given by
\begin{equation}
|b_n| \sim \frac{2}{ \pi^{1/2}}\cdot 4^n \cdot \frac{1}{n^{5/2}},
\end{equation}
therefore $h_2(\varphi)$ converges on the disc $|\varphi| \leq 1/4$. While the power series $h_3(\varphi)$
\begin{equation}
h_3(\varphi)=\sum_{n=2}^\infty c_n \varphi^n
\end{equation}
is determined by the recursion equation
\begin{equation}
c_2=12,~6(3-8n+5n^2)b_n+2n(n-1)^2(2n-1)c_n+n^4c_{n+1}=0.
\end{equation}
This recursion relation can also be explicitly solved, and we have
\begin{equation}
c_{n+1}=\frac{3!(-1)^{n+1} \binom{2n}{n}}{n^3}\left(3+2n(H_{n-1}-H_{2n-1} ) \right),
\end{equation}
where $H_r$ is the harmonic number
\begin{equation}
H_r:=\sum_{k=1}^r 1/k.
\end{equation} 
When $n$ goes to infinity, the absolute value of $c_n$ is asymptotically given by
\begin{equation}
|c_n| \sim \frac{12 \log 2}{\pi^{1/2}}\cdot 4^n \cdot \frac{1}{n^{5/2}},
\end{equation}
therefore $h_3(\varphi)$ converges on the disc $|\varphi| \leq 1/4$.

\subsection{\texorpdfstring{$\zeta(3)$}{z(3)} and periods} \label{sec:zeta3andcanonicalperiods}

The four canonical solutions $\{\varpi_i(\varphi) \}_{i=0}^3$ form a basis of the solution space of the Picard-Fuchs operator $\mathscr{D}_3$. Since $\Pi_0(\phi)$ is also a solution of $\mathscr{D}_3$, there must exist an expansion of $\Pi_0(\phi)$ with respect to the basis $\{\varpi_i(\varphi) \}_{i=0}^3$ of the form
\begin{equation}
\Pi_0(\phi)=\tau_0\varpi_0(\varphi)+\tau_1\varpi_1(\varphi)+\tau_2\varpi_2(\varphi)+\tau_3\varpi_3(\varphi),~\tau_i \in \mathbb{C}.
\end{equation}
However, it is very difficult to compute the numbers $\tau_i$ analytically, hence in this paper we will resort to a numerical method \cite{Yangapery}. From the formula \ref{eq:integralperiodFirst},  we can numerically compute the values of $\Pi_0(\phi)$ at the four points
\begin{equation}
\phi=1/2,1/3,1/4,1/5
\end{equation}
very efficiently. Next, we numerically evaluate $\varpi_i(\varphi)$ at the points
\begin{equation}
\varphi=2,3,4,5.
\end{equation}
However the power series $h_2(\varphi)$ and $h_3(\varphi)$ only converges on the disc $|\varphi|\leq 1/4$, thus we will use a different method to evaluate them at the above points. As an example, we will explain how to numerically compute the value $\varpi_2(2)$. From the power series expansion of $h_2(\varphi)$, we can numerically compute the values of $\varpi_2(\varphi)$ and its derivatives at $\varphi=1/20$ to a very high precision. Then we can numerically solve the Picard-Fuchs equation $\mathscr{D}_3$ on the interval $[1/20,2]$, from which we obtain the value of $\varpi_2(\varphi)$ at $\varphi=2$. Our Mathematica program has numerically shown that the values of $\tau_i$ agree with
\begin{equation} \label{eq:valuesoftaui}
\tau_0=4\,\zeta(3),~\tau_1=0,~\tau_2=0,~\tau_3=-\frac{1}{3}.
\end{equation}
We can easily check it to the first 100 digits. 
\begin{remark}
The occurrence of $\zeta(3)$ in $\tau_0$ is very interesting, and it seems to have close connection with mirror symmetry. The readers are referred to the paper \cite{KimYang} for more details.
\end{remark}

At the points $\varphi=\pm 1/4$ ($\phi=\pm 4$), the power series $\Pi_0(\phi)$, $h_2(\varphi)$ and $h_3(\varphi)$ all converge, which yields interesting identities
\begin{equation}
\Pi_0(4)=4\,\zeta(3)\, \varpi_0(1/4)-\frac{1}{3} \,\varpi_3(1/4),~\Pi_0(-4)=4\,\zeta(3)\, \varpi_0(-1/4)-\frac{1}{3} \,\varpi_3(-1/4).
\end{equation}
Let us now look at the two identities in detail. The first one implies
\begin{equation}
\begin{aligned}
2\,\sum_{n=1}^\infty  \frac{(-1)^{n-1}}{n^3 \binom{2n}{n}} \,4^{n-1}=&\zeta(3)+\frac{2}{3}\log^3 2  +  \sum_{n=1}^\infty \frac{(-1)^{n}\binom{2n}{n}}{4^n n^2} \log 2 \\
&-\frac{1}{2}\sum_{n=1}^\infty \frac{(-1)^{n+1} \binom{2n}{n}}{ n^3 4^n}\left(3+2n(H_{n-1}-H_{2n-1} ) \right).
\end{aligned}
\end{equation}
While the second one gives us two identities
\begin{equation}
\begin{aligned}
2 \sum_{n=1}^\infty  \frac{4^{n-1}}{n^3 \binom{2n}{n}} &=-\zeta(3)+\frac{1}{3}\pi^2 \log 2+\frac{4}{3}\log^3 2 
-\frac{1}{2}\sum_{n=1}^\infty \frac{\binom{2n}{n}}{ 4^n n^3}\left(3+2n(H_{n-1}-H_{2n-1} ) \right),\\
\sum_{n=1}^\infty \frac{\binom{2n}{n}}{4^n n^2}&=\frac{1}{6} \pi^2-2\,\log^2 2.
\end{aligned}
\end{equation}

\section{The Gamma conjecture for \texorpdfstring{$G$}{G}-functions} \label{sec:gammaconjecture}

In this section, we will formulate a Gamma conjecture for the $G$-functions whose differential equations have maximally unipotent monodromy at a singular point. Suppose $f(\phi)=\sum_{n=0}^\infty a_n \phi^n$ is a $G$-function, and for simplicity, we will assume $a_n \in \mathbb{Q}$ from now on. Suppose $\phi_0$ is a singularity of $f(\phi)$'s Picard-Fuchs equation, denoted by $\mathscr{D}_f$, where the monodromy is maximally unipotent. We define the variable $\varphi$ by
\begin{equation}
\varphi=
\begin{cases}
1/\phi,\,\,\,\,\,\,\,\,\,\text{if}~\phi_0=\infty;\\
\phi-\phi_0,~\text{otherwise}.\\
\end{cases}
\end{equation}
Suppose the order of $\mathscr{D}_f$ is $n$. Since the monodromy of $\mathscr{D}_f$ at $\phi_0$ is maximally unipotent, in a small neighborhood of $\varphi=0$, there exists a basis of the solution space given by
\begin{equation}
\varpi_j(\varphi)=\frac{1}{(2 \pi i)^j}\sum_{k=0}^{j} \binom{j}{k} h_k(\varphi)\,\log^{j-k} \varphi,~j=0,1\cdots n-1;
\end{equation}
where $h_k(\varphi)$ are power series in $\varphi$ that satisfy
\begin{equation}
h_k(0)=0.
\end{equation}
Since $f(\phi)$ is also a solution of $\mathscr{D}_f$, there exist complex numbers $\tau_i^f$ such that
\begin{equation}
f(\phi)=\tau^f_0 \varpi_0(\varphi)+\tau^f_1 \varpi_1(\varphi)+\cdots+\tau^f_{n-1} \varpi_{n-1}(\varphi).
\end{equation}
We now formulate a Gamma conjecture for the $G$-function $f(\phi)$ \cite{Galkin,KimYang,Yang}.
\vspace*{0.1in}

\textbf{Conjecture G}: \textit{After a rescaling of the variable $\varphi$ by $\varphi \rightarrow k \varphi,k \in \mathbb{Q}$, the complex numbers $\tau^f_i$ lie in the ring over $\mathbb{Q}$ generated by $2 \pi i$ and multiple zeta values}.

\vspace*{0.1in}

On the other hand, if we assume the Bombieri-Dwork conjecture, the Picard-Fuchs equation $\mathscr{D}_f$ comes from geometry. Then \textbf{Conjecture G} has close connections with the Gamma conjecture in algebraic geometry \cite{Galkin,KimYang,Yang}. It is certainly very interesting to provide more examples for this conjecture, e.g. the papers \cite{Yangdoublezeta, Yangapery}. Now we will provide more examples in next section.

\section{Further examples} \label{sec:furtherexamples}

In this section, we will look at the series
\begin{equation}
\sum_{n=1}^{\infty} \frac{(-1)^{n-1}}{n^k \binom{2n}{n}},~ k \geq 2, k \in \mathbb{Z}_+.
\end{equation}
By abuse of notation, we again construct a power series $\Pi_0(\phi)$ by
\begin{equation}
\Pi_0(\phi)=2\,\sum_{n=1}^\infty  \frac{(-1)^{n-1}}{n^k \binom{2n}{n}} \,\phi^{n-1},
\end{equation}
which converges on the disc $|\phi| \leq 4$. Let us now study the first several small $k$.

\subsection{\texorpdfstring{$k=2$}{k=2}}

When $k=2$, $\Pi_0(\phi)$ is a solution to the following ODE
\begin{equation}
\mathscr{D}_2\Pi_0(\phi)=0,~\mathscr{D}_2:= \left(4+\phi \right)\vartheta^3+\left(6+3\phi \right) \vartheta^2+\left(2+3\phi \right)\vartheta+\phi.
\end{equation}
In a neighborhood of $\varphi=0$, $\mathscr{D}_2$ has three canonical solutions given by
\begin{equation}
\begin{aligned}
\varpi_0(\varphi)&=\varphi, \\
\varpi_1(\varphi)&=\varphi \log \varphi+ \sum_{n=1}^\infty \frac{(-1)^n\binom{2n}{n}}{n} \varphi^{n+1},\\
\varpi_2(\varphi)&=\varphi \log^2 \varphi+2 \left( \sum_{n=1}^\infty \frac{(-1)^n\binom{2n}{n}}{n} \varphi^{n+1} \right) \log \varphi \\
&~~~+\sum_{n=1}^\infty \frac{2!(-1)^{n+1}\binom{2n}{n}}{n^2}\left( 2+2n(H_{n-1}-H_{2n-1}) \right)\varphi^{n+1}.
\end{aligned}
\end{equation}
Numerically, we have found that the expansion of $\Pi_0(\phi)$ with respect to this basis is 
\begin{equation}
\Pi_0(\phi)=\varpi_2(\varphi).
\end{equation}
The equation
\begin{equation}
\Pi_0(4)=\varpi_2(1/4)
\end{equation}
yields the following identity
\begin{equation}
\begin{aligned}
2\,\sum_{n=1}^\infty  \frac{(-1)^{n-1}}{n^2 \binom{2n}{n}} \,4^{n-1}&=\log^2 2- \left( \sum_{n=1}^\infty \frac{(-1)^n\binom{2n}{n}}{4^n n} \right) \log 2 \\
&~~~+\frac{1}{2}\sum_{n=1}^\infty \frac{(-1)^{n+1}\binom{2n}{n}}{4^{n} n^2}\left( 2+2n(H_{n-1}-H_{2n-1}) \right).
\end{aligned}
\end{equation}
Since $\Pi_0(-4)$ is a real number, the equation
\begin{equation}
\Pi_0(-4)=\varpi_2(-1/4)
\end{equation}
actually yields two identities
\begin{equation}
\Pi_0(-4)=\text{Re}\,\varpi_2(-1/4),~\text{Im}\,\varpi_2(-1/4)=0;
\end{equation}
which respectively are
\begin{equation}
\begin{aligned}
2\,\sum_{n=1}^\infty  \frac{4^{n-1}}{n^2 \binom{2n}{n}} &=\frac{1}{4} \pi^2+\log^2 2+\frac{1}{2}\sum_{n=1}^\infty \frac{ \binom{2n}{n}}{4^{n} n^2}\left( 2+2n(H_{n-1}-H_{2n-1}) \right), \\
\sum_{n=1}^\infty \frac{\binom{2n}{n}}{4^n n}&=2 \log 2.
\end{aligned}
\end{equation}

\subsection{\texorpdfstring{$k=4$}{k=4}}

When $k=4$, $\Pi_0(\phi)$ is a solution to the differential operator
\begin{equation}
\mathscr{D}_4:= \left(4+\phi \right)\vartheta^5+\left(14+5\phi \right) \vartheta^4+(18+10 \phi)\vartheta^3+\left(10+10\phi \right)\vartheta^2+(2+5\phi) \vartheta+\phi.
\end{equation}
In a neighborhood of $\varphi=0$, $\mathscr{D}_4$ has five canonical solutions given by
\begin{equation}
\begin{aligned}
\varpi_i&=\varphi \,\log^i \,\varphi,~i=0,1,2, \\
\varpi_3&=\varphi \log^3 \varphi+ \sum_{n=1}^\infty \frac{3! (-1)^n\binom{2n}{n}}{n^3} \varphi^{n+1},\\
\varpi_4&=\varphi \log^4 \varphi+4 \left( \sum_{n=1}^\infty \frac{3!(-1)^n\binom{2n}{n}}{n^3} \varphi^{n+1} \right) \log \varphi \\
&~~~+\sum_{n=1}^\infty \frac{4! (-1)^{n+1}\binom{2n}{n}}{n^4}\left( 4+2n(H_{n-1}-H_{2n-1}) \right)\varphi^{n+1}.
\end{aligned}
\end{equation}
Numerically, we have found that the expansion of $\Pi_0(\phi)$ with respect to this basis is 
\begin{equation}
\Pi_0(\phi)=-6\,\zeta(4)\,\varpi_0(\varphi)-4\,\zeta(3)\,\varpi_1(\varphi)+\frac{1}{12}\varpi_4(\varphi).
\end{equation}
As before, the equation
\begin{equation}
\Pi_0(4)=-6\,\zeta(4)\,\varpi_0(1/4)-4\,\zeta(3)\,\varpi_1(1/4)+\frac{1}{12}\varpi_4(1/4).
\end{equation}
yields the following identity
\begin{equation}
\begin{aligned}
2\,\sum_{n=1}^\infty  \frac{(-1)^{n-1}}{n^4 \binom{2n}{n}} \,4^{n-1}&=-\frac{1}{60}\pi^4+2 \zeta(3) \log 2+\frac{1}{3} \log^4 2 -\left( \sum_{n=1}^\infty \frac{(-1)^n\binom{2n}{n}}{4^n n^3} \right) \log 2 \\
&~~~+\frac{1}{2}\sum_{n=1}^\infty \frac{(-1)^{n+1}\binom{2n}{n}}{4^{n} n^4}\left( 4+2n(H_{n-1}-H_{2n-1}) \right).
\end{aligned}
\end{equation}
Since $\Pi_0(-4)$ is a real number, the real part and imaginary part of the equation
\begin{equation}
\Pi_0(-4)=-6\,\zeta(4)\,\varpi_0(-1/4)-4\,\zeta(3)\,\varpi_1(-1/4)+\frac{1}{12}\varpi_4(-1/4).
\end{equation}
give us two identities
\begin{equation}
\begin{aligned}
2\,\sum_{n=1}^\infty  \frac{4^{n-1}}{n^4 \binom{2n}{n}} &=-\frac{1}{240} \pi^4+\frac{1}{6}\pi^2 \log^2 2+\log^4 2+\frac{1}{2}\sum_{n=1}^\infty \frac{ \binom{2n}{n}}{4^{n} n^4}\left( 4+2n(H_{n-1}-H_{2n-1}) \right), \\
\sum_{n=1}^\infty \frac{\binom{2n}{n}}{4^n n^3}&=2\, \zeta(3)-\frac{1}{3} \pi^2 \log 2+\frac{4}{3} \log^3 2.
\end{aligned}
\end{equation}

\subsection{\texorpdfstring{$k=5$}{k=5}}

When $k=5$, $\Pi_0(\phi)$ is a solution to the differential operator
\begin{equation}
\begin{aligned}
\mathscr{D}_5:=& \left(4+\phi \right)\vartheta^6+\left(18+6\phi \right) \vartheta^5+(32+15 \phi)\vartheta^4, \\
&+\left(28+20\phi \right)\vartheta^3+(12+15\phi) \vartheta^2+(2+6 \phi)\vartheta+\phi.
\end{aligned}
\end{equation}
In a neighborhood of $\varphi=0$, $\mathscr{D}_5$ has six canonical solutions given by
\begin{equation}
\begin{aligned}
\varpi_i&=\varphi \,\log^i \,\varphi,~i=0,1,2,3, \\
\varpi_4&=\varphi \log^4 \varphi+ \sum_{n=1}^\infty \frac{4! (-1)^n\binom{2n}{n}}{n^4} \varphi^{n+1},\\
\varpi_5&=\varphi \log^5 \varphi+5 \left( \sum_{n=1}^\infty \frac{4! (-1)^n\binom{2n}{n}}{n^4} \varphi^{n+1} \right) \log \varphi \\
&~~~+\sum_{n=1}^\infty \frac{5! (-1)^{n+1}\binom{2n}{n}}{n^5}\left( 5+2n(H_{n-1}-H_{2n-1}) \right)\varphi^{n+1}.
\end{aligned}
\end{equation}
Numerically, we have found that the expansion of $\Pi_0(\phi)$ with respect to this basis is 
\begin{equation}
\Pi_0(\phi)=12\,\zeta(5)\,\varpi_0(\varphi)+6\,\zeta(4)\,\varpi_1(\varphi)+2\,\zeta(3)\,\varpi_2(\varphi)-\frac{1}{60}\varpi_5(\varphi).
\end{equation}
As before, the equation
\begin{equation}
\Pi_0(4)=12\,\zeta(5)\,\varpi_0(1/4)+6\,\zeta(4)\,\varpi_1(1/4)+2\,\zeta(3)\,\varpi_2(1/4)-\frac{1}{60}\varpi_5(1/4).
\end{equation}
yields the following identity
\begin{equation}
\begin{aligned}
2\,\sum_{n=1}^\infty  \frac{(-1)^{n-1}}{n^5 \binom{2n}{n}} \,4^{n-1}&=3\,\zeta(5)-\frac{1}{30}\pi^4 \log 2+2 \zeta(3) \log^2 2+\frac{2}{15} \log^5 2 +\left( \sum_{n=1}^\infty \frac{(-1)^n\binom{2n}{n}}{4^n n^4} \right) \log 2 \\
&~~~+\frac{1}{2}\sum_{n=1}^\infty \frac{(-1)^{n}\binom{2n}{n}}{4^{n} n^5}\left( 5+2n(H_{n-1}-H_{2n-1}) \right).
\end{aligned}
\end{equation}
Since $\Pi_0(-4)$ is a real number, the real part and the imaginary part of the equation
\begin{equation}
\Pi_0(-4)=12\,\zeta(5)\,\varpi_0(-1/4)+6\,\zeta(4)\,\varpi_1(-1/4)+2\,\zeta(3)\,\varpi_2(-1/4)-\frac{1}{60}\varpi_5(-1/4).
\end{equation}
give us two identities
\begin{equation}
\begin{aligned}
2\,\sum_{n=1}^\infty  \frac{4^{n-1}}{n^5 \binom{2n}{n}} &=-3 \zeta(5)-\frac{1}{30} \pi^4 \log 2+2\zeta(3) \log^2 2+\frac{1}{2} \zeta(3) \pi^2+\frac{8}{15} \log^5 2 \\
&-\frac{1}{2} \sum_{n=1}^\infty \frac{ \binom{2n}{n}}{4^{n} n^5}\left( 5+2n(H_{n-1}-H_{2n-1}) \right), \\
\sum_{n=1}^\infty \frac{\binom{2n}{n}}{4^n n^4}&=\frac{1}{40} \pi^4-4  \zeta(3) \log 2+\frac{1}{3} \pi^2 \log^2 2-\frac{2}{3} \log^4 2.
\end{aligned}
\end{equation}

\subsection{\texorpdfstring{$k=6$}{k=6}}

When $k=6$, $\Pi_0(\phi)$ is a solution to the differential operator
\begin{equation}
\begin{aligned}
\mathscr{D}_6:=& \left(4+\phi \right)\vartheta^7+\left(22+7 \phi \right) \vartheta^6+\left(50+21\phi \right) \vartheta^5+(60+35 \phi)\vartheta^4, \\
&+\left(40+35\phi \right)\vartheta^3+(14+21\phi) \vartheta^2+(2+7 \phi)\vartheta+\phi.
\end{aligned}
\end{equation}
In a neighborhood of $\varphi=0$, $\mathscr{D}_6$ has seven canonical solutions given by
\begin{equation}
\begin{aligned}
\varpi_i&=\varphi \,\log^i \,\varphi,~i=0,1,2,3,4, \\
\varpi_5&=\varphi \log^5 \varphi+ \sum_{n=1}^\infty \frac{5! (-1)^n\binom{2n}{n}}{n^5} \varphi^{n+1},\\
\varpi_6&=\varphi \log^6 \varphi+6 \left( \sum_{n=1}^\infty \frac{5! (-1)^n\binom{2n}{n}}{n^5} \varphi^{n+1} \right) \log \varphi \\
&~~~+\sum_{n=1}^\infty \frac{6! (-1)^{n+1}\binom{2n}{n}}{n^6}\left( 6+2n(H_{n-1}-H_{2n-1}) \right)\varphi^{n+1}.
\end{aligned}
\end{equation}
Numerically, we have found that the expansion of $\Pi_0(\phi)$ with respect to this basis is 
\begin{equation}
\begin{aligned}
\Pi_0(\phi)=&(-20\,\zeta(6)+4\,\zeta(3)^2)\varpi_0(\varphi)-12\,\zeta(5)\,\varpi_1(\varphi) \\
&-3\,\zeta(4)\,\varpi_2(\varphi)-\frac{2}{3}\,\zeta(3)\,\varpi_3(\varphi)+\frac{2}{6!}\varpi_6(\varphi).
\end{aligned}
\end{equation}
As before, the equation
\begin{equation}
\begin{aligned}
\Pi_0(4)=&(-20\,\zeta(6)+4\,\zeta(3)^2)\varpi_0(1/4)-12\,\zeta(5)\,\varpi_1(1/4) \\
&-3\,\zeta(4)\,\varpi_2(1/4)-\frac{2}{3}\,\zeta(3)\,\varpi_3(1/4)+\frac{2}{6!}\varpi_6(1/4).
\end{aligned}
\end{equation}
yields the following identity
\begin{equation}
\begin{aligned}
2\,\sum_{n=1}^\infty  \frac{(-1)^{n-1}}{n^6 \binom{2n}{n}} \,4^{n-1}&=-\frac{1}{189} \pi^6+6\,\zeta(5)\log 2-\frac{1}{30}\pi^4 \log^2 2+\zeta(3)^2+\frac{4}{3} \zeta(3) \log^3 2+\frac{2}{45} \log^6 2 \\
&-\left( \sum_{n=1}^\infty \frac{(-1)^n\binom{2n}{n}}{4^n n^5} \right) \log 2 +\frac{1}{2}\sum_{n=1}^\infty \frac{(-1)^{n+1}\binom{2n}{n}}{4^{n} n^6}\left( 6+2n(H_{n-1}-H_{2n-1}) \right).
\end{aligned}
\end{equation}
Since $\Pi_0(-4)$ is a real number, the real part and the imaginary part of the equation
\begin{equation}
\begin{aligned}
\Pi_0(-4)=&(-20\,\zeta(6)+4\,\zeta(3)^2)\varpi_0(-1/4)-12\,\zeta(5)\,\varpi_1(-1/4) \\
&-3\,\zeta(4)\,\varpi_2(-1/4)-\frac{2}{3}\,\zeta(3)\,\varpi_3(-1/4)+\frac{2}{6!}\varpi_6(-1/4).
\end{aligned}
\end{equation}
give us two identities
\begin{equation}
\begin{aligned}
2\,\sum_{n=1}^\infty  \frac{4^{n-1}}{n^6 \binom{2n}{n}} &=-\frac{71}{30240} \pi^6-\frac{7}{120} \pi^4 \log^2 2-\zeta(3)^2+\frac{2}{3}\zeta(3)\pi^2\log 2+\frac{8}{3} \zeta(3) \log^3 2\\
&-\frac{1}{18}\pi^2 \log^4 2+\frac{2}{9} \log^6 2 +\frac{1}{2} \sum_{n=1}^\infty \frac{ \binom{2n}{n}}{4^{n} n^6}\left( 6+2n(H_{n-1}-H_{2n-1}) \right), \\
\sum_{n=1}^\infty \frac{\binom{2n}{n}}{4^n n^5}&=6\,\zeta(5)-\frac{1}{20} \pi^4 \log 2-\frac{1}{3} \zeta(3) \pi^2+4 \, \zeta(3) \, \log^2 2 -\frac{2}{9} \pi^2 \log^3 2+\frac{4}{15} \log^5 2.
\end{aligned}
\end{equation}

\subsection{\texorpdfstring{$k=7$}{k=7}}

When $k=7$, $\Pi_0(\phi)$ is a solution to the differential operator
\begin{equation}
\begin{aligned}
\mathscr{D}_7:=& \left(4+\phi \right)\vartheta^8+\left(26+8 \phi \right) \vartheta^7+\left(72+28 \phi \right) \vartheta^6+\left(110+56 \phi \right) \vartheta^5+(100+70 \phi)\vartheta^4, \\
&+\left(54+56\phi \right)\vartheta^3+(16+28\phi) \vartheta^2+(2+8 \phi)\vartheta+\phi.
\end{aligned}
\end{equation}
In a neighborhood of $\varphi=0$, $\mathscr{D}_7$ has eight canonical solutions given by
\begin{equation}
\begin{aligned}
\varpi_i&=\varphi \,\log^i \,\varphi,~i=0,1,2,3,4,5, \\
\varpi_6&=\varphi \log^6 \varphi+ \sum_{n=1}^\infty \frac{6! (-1)^n\binom{2n}{n}}{n^6} \varphi^{n+1},\\
\varpi_7&=\varphi \log^7 \varphi+7 \left( \sum_{n=1}^\infty \frac{6! (-1)^n\binom{2n}{n}}{n^6} \varphi^{n+1} \right) \log \varphi \\
&~~~+\sum_{n=1}^\infty \frac{7! (-1)^{n+1}\binom{2n}{n}}{n^7}\left( 7+2n(H_{n-1}-H_{2n-1}) \right)\varphi^{n+1}.
\end{aligned}
\end{equation}
Numerically, we have found that the expansion of $\Pi_0(\phi)$ with respect to this basis is 
\begin{equation}
\begin{aligned}
\Pi_0(\phi)=&(36\zeta(7)-12 \zeta(3)\zeta(4) ) \varpi_0(\varphi)+(20\,\zeta(6)-4\,\zeta(3)^2)\varpi_1(\varphi)+6\,\zeta(5)\,\varpi_2(\varphi) \\
&+\,\zeta(4)\,\varpi_3(\varphi)+\frac{1}{6}\,\zeta(3)\,\varpi_4(\varphi)-\frac{2}{7!}\varpi_7(\varphi).
\end{aligned}
\end{equation}
As before, the equation
\begin{equation}
\begin{aligned}
\Pi_0(4)=&(36\zeta(7)-12 \zeta(3)\zeta(4) ) \varpi_0(1/4)+(20\,\zeta(6)-4\,\zeta(3)^2)\varpi_1(1/4)+6\,\zeta(5)\,\varpi_2(1/4) \\
&+\,\zeta(4)\,\varpi_3(1/4)+\frac{1}{6}\,\zeta(3)\,\varpi_4(1/4)-\frac{2}{7!}\varpi_7(1/4).
\end{aligned}
\end{equation}
yields the following identity
\begin{equation}
\begin{aligned}
2\,\sum_{n=1}^\infty  \frac{(-1)^{n-1}}{n^7 \binom{2n}{n}} \,4^{n-1}&=9\zeta(7)-\frac{2}{189} \pi^6 \log 2+6\,\zeta(5)\log^2 2-\frac{1}{45}\pi^4 \log^3 2-\frac{1}{30} \pi^4 \zeta(3) \\
&~~~+\frac{2}{3}\zeta(3) \log^4 2+2 \zeta(3)^2 \log 2+\frac{4}{315} \log^7 2+\left( \sum_{n=1}^\infty \frac{(-1)^n\binom{2n}{n}}{4^n n^6} \right) \log 2 \\
&~~~-\frac{1}{2}\sum_{n=1}^\infty \frac{(-1)^{n+1}\binom{2n}{n}}{4^{n} n^7}\left( 7+2n(H_{n-1}-H_{2n-1}) \right).
\end{aligned}
\end{equation}
Since $\Pi_0(-4)$ is a real number, the real part and the imaginary part of the equation
\begin{equation}
\begin{aligned}
\Pi_0(-4)=&(36\zeta(7)-12 \zeta(3)\zeta(4) ) \varpi_0(-1/4)+(20\,\zeta(6)-4\,\zeta(3)^2)\varpi_1(-1/4)+6\,\zeta(5)\,\varpi_2(-1/4) \\
&+\,\zeta(4)\,\varpi_3(-1/4)+\frac{1}{6}\,\zeta(3)\,\varpi_4(-1/4)-\frac{2}{7!}\varpi_7(-1/4).
\end{aligned}
\end{equation}
give us two identities
\begin{equation}
\begin{aligned}
2\,\sum_{n=1}^\infty  \frac{4^{n-1}}{n^7 \binom{2n}{n}} &=-9\zeta(7)-\frac{5}{504} \pi^6 \log 2 +\frac{3}{2} \zeta(5) \pi^2+6 \zeta(5) \log^2 2-\frac{1}{18} \pi^4 \log^3 2\\
&-\frac{1}{120} \pi^4 \zeta(3) +\frac{1}{3}\zeta(3) \pi^2 \log^2 2+2 \zeta(3) \log^4 2-\frac{2}{45} \pi^2 \log^5 2+\frac{8}{105} \log^7 2\\
&-\frac{1}{2} \sum_{n=1}^\infty \frac{ \binom{2n}{n}}{4^{n} n^7}\left( 7+2n(H_{n-1}-H_{2n-1}) \right), \\
\sum_{n=1}^\infty \frac{\binom{2n}{n}}{4^n n^6}&=\frac{79}{15120} \pi^6-12\,\zeta(5) \log 2+\frac{1}{20} \pi^4 \log^2 2-2 \zeta(3)^2\\
&~~~+\frac{2}{3} \zeta(3) \pi^2 \log 2-\frac{8}{3} \, \zeta(3) \, \log^3 2 +\frac{1}{9} \pi^2 \log^4 2-\frac{4}{45} \log^6 2.
\end{aligned}
\end{equation}

\section{Future prospects and comments} \label{sec:furtherprospects}

We will loosely end this paper with several small remarks and open questions. The Bombieri-Dwork conjecture is of course very difficult to prove, and not much is known by now. But the \textbf{Conjecture G} formulated in this paper seems to be more accessible. Moreover, \textbf{Conjecture G} has very close connections to the mirror symmetry of Calabi-Yau manifolds and the Gamma conjecture in algebraic geometry. On the other hand, we have also provided a numerical method to check \textbf{Conjecture G}, which also gives us interesting infinite series sums about zeta values. It is certainly very interesting to provide more examples to \textbf{Conjecture G}.

\section*{Acknowledgments}

The author is grateful to Minhyong Kim for a reading of the draft and many helpful comments.

\appendix

\end{document}